\numberwithin{equation}{section}
\newtheorem{thm}{Theorem}[section]
\newtheorem{pro}{Proposition}[section]
\newtheorem{lma}{Lemma}[section]
\theoremstyle{definition}
\newtheorem{dfn}{Definition}[section]
\newtheorem{rem}{Remark}[section]
\newtheorem{exa}{Example}[section]
\renewcommand{\d}{\mathrm{d}}
\newcommand{\N}{\mathbb{N}}
\newcommand{\R}{\mathbb{R}}
\newcommand{\E}{\mathbf{E}}
\begin{document}
\title{On modeling weakly stationary processes}
\author{Pauliina Ilmonen \and Lauri Viitasaari}

\renewcommand{\thefootnote}{\fnsymbol{footnote}}

\author{Lauri Viitasaari\footnotemark[1] \, and \, Pauliina Ilmonen\footnotemark[2]}

\footnotetext[1]{Department of Information and Service Management, Aalto University School of Business, P.O. Box 21210, FIN-00076 Aalto, Finland, \texttt{lauri.viitasaari@iki.fi}}

\footnotetext[2]{Department of Mathematics and Systems Analysis, Aalto University School of Science, P.O. Box 11100, FIN-00076 Aalto, Finland, \texttt{pauliina.ilmonen@aalto.fi}}

\maketitle

\begin{abstract}
\noindent
In this article, we show that a general class of weakly stationary time series can be modeled applying Gaussian subordinated processes. We show that, for any given weakly stationary time series $(z_t)_{z\in\N}$ with given equal one-dimensional marginal distribution, one can always construct a function $f$ and a Gaussian process $(X_t)_{t\in\N}$ such that $\left(f(X_t)\right)_{t\in\N}$ has the same marginal distributions and, asymptotically, the same autocovariance function as $(z_t)_{t\in\N}$. Consequently, we obtain asymptotic distributions for the mean and autocovariance estimators by using the rich theory on limit theorems for Gaussian subordinated processes. This highlights the role of Gaussian subordinated processes in modeling general weakly stationary time series. We compare our approach to standard linear models, and show that our model is more flexible and requires weaker assumptions.
\end{abstract}

{\small
\medskip

\noindent
\textbf{AMS 2010 Mathematics Subject Classification:} 60G10, 62M10, 60F05

\medskip

\noindent
\textbf{Keywords:}
weak stationarity, autocovariance function, Gaussian subordinated processes, estimation, central limit theorem
}




\section{Introduction}
Time series models are of great significance in numerous areas of applications, e.g. finance, climatology and signal processing, to name just a few. Central limit theorems play an important role in statistical inference.  However, due to dependencies, it is challenging to obtain central limit theorems under general time series models. Moreover, from practical point of view, obtaining central limit theorem is not enough. It is also important to study how fast the convergence takes place, i.e. how far one is from the limiting distribution. 

A simple generalisation of the classical central limit theorem is the central limit theorem for $M$-dependent sequence of random variables. That is, the elements in the sequence are independent, if their indices are far away from each other. For general time series with arbitrary dependence structure, the problem becomes more subtle, and it might happen that the limiting distribution is not Gaussian and/or that one has to use different scaling than the classical $\sqrt{T},$ where $T$ is the sample size. Thus, a natural approach to the problem is to study limiting distributions of properly scaled averages of stationary processes with a given autocovariance structure. What happens on the limit is dictated by the dependence structure of the time series. If the dependence is weak enough, then central limit theorem is obtained. See a recent book \cite{rio} for a comprehensive introduction to the topic and \cite{her} for functional central limit theorem. Another option is to impose mixing conditions. Limit theorems for strong mixing processes are studied, e.g. in \cite{ded-rio,dou-mas-rio,mer-pel}. However, specific mixing conditions are often more than difficult to verify.

If we consider stationary time series models, two general classes, linear processes and Gaussian subordinated processes, are applied extensively in different fields. The class of univariate linear processes consists of stationary processes $(z_t)_{t\in\N}$ of the form
$$
z_t = \sum_{j=-\infty}^\infty \phi_j \xi_{t-j},
$$
where the coefficients $\phi_j$ satisfy some specific assumptions and $(\xi_j)_{j\in\N}$ is a sequence of independent and identically distributed random variables. For example, this class covers stationary ARMA-models with I.I.D. errors. For theory of such processes together with central limit theorems we refer to \cite{bro-dav} as well as to more recent articles \cite{ho-hsi,wu} studying limit theorems of linear processes. Finally, we mention \cite{che-ho}, where Berry-Esseen type bounds are derived for linear processes and \cite{jac-et-al2,jac-et-al}, where estimation of the mean and the autocovariances is studied in the case of long-memory and heavy-tailed linear processes.

The class of univariate Gaussian subordinated processes consists of stationary processes $(z_t)_{t\in\N}$ of the form $z_t = f(X_t)$, where $(X_t)_{t\in\N}$ is a $d$-variate stationary Gaussian process and $f$ is a given function. It is usually assumed that $f(X_0) \in L^2$. Central limit theorems for such time series date back to Breuer and Major \cite{breuer-major} and the topic has been studied extensively. Indeed,
for Gaussian subordinated processes central and non-central limit theorems have been studied at least in \cite{arcones-clt,avr-taq,bai-taq2,bai-taq,dob-maj,gir-sur}. Motivated by real-life applications, the non-central limit theorems have been studied mostly in the case of long-memory processes. In this case one has to use stronger normalisation, and the limiting distribution is Gaussian only if the so-called Hermite rank of the function $f$ is 1. More generally, in this case, the properly scaled average of $z_t$ converges towards a Hermite process of order $k$, where $k$ is the Hermite rank of $f$. These central and non-central limit theorems have been considered in statistical applications for long-memory processes at least in \cite{deh-taq} (empirical process and U-statistics), \cite{deh-roo-taq} (change point tests), \cite{levy-et-al} (estimation of scale and autocovariances in Gaussian setup), and \cite{gir-taq} (Whittle estimator).

In addition to the study of long-memory case and non-central limit theorems, the central limit theorems for Gaussian subordinated stationary processes have emerged again to the center of mathematical community's interest. The reason behind this is that it has been observed that Stein's method and Malliavin calculus suit together admirably well --- straightforwardly giving new tools to study central limit theorems for Gaussian subordinated processes. For recent developments on the topic, we refer to the articles \cite{nou-pec2,nou-pec1} and to the monograph \cite{n-p}. Also, a stronger version of the Breuer-Major theorem was proven in \cite{ivan-et-al}. It was proven that, in addition to the convergence in distribution, the convergence towards normal random variable holds even in stronger topologies, such as Kolmogorov or Wasserstein distance. Moreover, the authors also provided Berry-Esseen type bounds in these metrics. Finally, we mention \cite{bar-sur}, where the result was generalised to cover non-stationary Gaussian fields.

In this article, we consider univariate weakly stationary time series $(z_t)_{t\in\N}$. We study the asymptotic behavior of the traditional mean and autocovariance estimators under the assumption of  equal one-dimensional marginal distributions\footnote{By one-dimensional marginal distributions we refer to the distributions of $z_t$ for fixed time indices $t$} of $(z_t)_{t\in\N}$. Our main contribution is to show that for any weakly stationary time series $(z_t)_{t\in\N}$ with some given autocovariance structure and with some given equal one-dimensional marginal distributions, one can always construct a univariate Gaussian process $(X_t)_{t\in\N}$ and a function $f$ such that $(f(X_t))_{t\in\N}$ has, asymptotically, the same autocovariance structure and the same one-dimensional marginal distributions as $(z_t)_{t\in\N}$. Relying on that, we complement the above mentioned works on limit theorems in the case of Gaussian subordination. There exists a rich literature on the topic, and we propose to model time series directly with $(f(X_t))_{t\in\N}$. In comparison to the above mentioned literature, where the model is assumed to be $(f(X_t))_{t\in\N}$, we start with a given weakly stationary time series with equal one-dimensional marginals, and we construct a function $f$ and a Gaussian process $(X_t)_{t\in\N}$ such that $(f(X_t))_{t\in\N}$ is a suitable model for $(z_t)_{t\in\N}$. We obtain limiting normal distributions for the traditional mean and autocovariance estimators for any time series within our model that has absolutely summable autocovariance function. This corresponds to the case with short memory. In addition, we show that within our model, as desired, the function $f$ does have Hermite rank equal to 1. Indeed, Hermite rank equal to 1 ensures that even in the long-memory case, the limiting distribution is normal. We also show that if the one-dimensional marginal distribution is symmetric, then the corresponding Hermite ranks for variance and autocovariance estimators are (essentially) equal to 2. As such, our model is particularly suitable for modeling long memory, in which case the exact knowledge on the Hermite ranks is crucially important. We compare our approach and results to the existing literature including comparison to the theory of linear processes that are covered by our model. Note that, our model is not limited to, but covers e.g. stationary ARMA-models. We observe that the assumptions that are usually posed in the literature for obtaining limiting normal distribution, are clearly stronger than the assumptions we require. For example, in the short memory case our assumption of summable covariance function is rather intuitive, as well as easily verified, compared to, e.g. complicated assumptions on the coefficients $\phi_j$ of linear processes. These results highlight the applicability of Gaussian subordinated processes in modeling weakly stationary time series.

The rest of the article is organised as follows. In section \ref{sec:prel} we recall some basic definitions and preliminaries on Gaussian subordination. In Section \ref{sec:results} we introduce and discuss our model. Section \ref{sec:calibration} is devoted to the study of the standard mean, variance, and autocovariance estimators in the framework of our model. In Section \ref{sec:conc} we give some concluding remarks and compare our approach to the existing literature.

\section{Preliminaries}
\label{sec:prel}
In this section we review some basic definitions and fundamental results that are later applied in Section \ref{sec:results}. We start by recalling the definition of weak stationarity.
\begin{dfn}
Let $(z_t)_{t\in\N}$ be a stochastic process. Then $(z_t)_{t\in\N}$ is \emph{weakly stationary} if for all $t,s\in\N$,
\begin{enumerate}
\item $\E z_t = \mu < \infty$,
\item $\E z_t^2 = \sigma^2 < \infty$, \ and
\item $Cov(z_t,z_s) =r(t-s)$ for some function $r$.
\end{enumerate}
\end{dfn}
\begin{dfn}
Let $(z_t)_{t\in \N}$ be stationary with autocovariance function $r$. 
\begin{enumerate}
\item The process $z$ is called \emph{short-range dependent}, if 
$$
\sum_{j=1}^\infty |r(j)| < \infty.
$$
\item The process $z$ is called \emph{long-range dependent}, if, as $|j|\to \infty$, we have 
\begin{equation}
\label{eq:long-range-asymptotics}
r(j) \sim |j|^{2H-2}
\end{equation}
for some $H\in \left(\frac12,1\right)$. Here the notation $g(j)\sim f(j)$ means that $\lim_{j\to \infty} \frac{g(j)}{f(j)} = C$ for some constant $C$.
\end{enumerate}
\end{dfn}
\begin{rem}
To achieve greater generality, it is customary in the literature to add some slowly varying function $L(j)$ on the asymptotic behaviour \eqref{eq:long-range-asymptotics} of $r(j)$. For the sake of simplicity of the presentation, we have omitted this factor in our definition. However, it is straightforward to check that all our results remain valid in the general case as well.
\end{rem}

We now recall Hermite polynomials and the Hermite ranks of functions.

The Hermite polynomials $H_k$ are defined recursively as follows:
$$
H_0(x)=1, H_1(x) = x, \text{ \ and \ } H_{k+1}(x) = xH_k(x) - kH_{k-1}(x).
$$
The $k$th Hermite polynomial $H_k$ is clearly a polynomial of degree $k.$ Moreover,
it is well-known that Hermite polynomials form an orthogonal basis of the Hilbert space of functions $f$ satisfying
$$
\int_{-\infty}^{\infty} [f(x)]^2 e^{-\frac{x^2}{2}}\d x < \infty,
$$
or equivalently, $\E [f(X)]^2< \infty,$ where $X\sim N(0,1)$. Every $f$ that belongs to that Hilbert space has a Hermite decomposition
\begin{equation}
\label{eq:hermite_decomposition}
f(x) = \sum_{k=0}^\infty \alpha_k H_k(x),
\end{equation}
and for $X \sim N(0,1)$, $Y\sim N(0,1),$ we have that
\begin{equation}
\label{eq:f-cov}
\E [f(X)f(Y)] = \sum_{k=0}^\infty k! \alpha_k^2 \left[Cov(X,Y)\right]^k.
\end{equation}

\begin{dfn}[Hermite rank]
Let $(X_t)_{t\in\N},$ $X_t=\left(X_t^{(1)}, X_t^{(2)}, ..., X_t^{(d)}\right),$ be a $d$-dimensional stationary Gaussian process. Let $f: \mathbb{R}^d \rightarrow \mathbb{R},$  $f(X_t) \in L^2$. The function $f$ is said to have Hermite rank $q$ with respect to $X_t,$ if  $\E\left[(f(X_t)-\E f(X_t))p_m(X_t)\right]=0$ for all polynomials $p_m: \mathbb{R}^d \rightarrow \mathbb{R}$ that are of degree $m\leq q-1,$ and if there exists a polynomial $p_q$ of degree $q$ such that $\E\left[(f(X_t)-\E f(X_t))p_q(X_t)\right]\neq 0$.
\end{dfn}

\begin{rem}
Note that the Hermite rank of a function $f$ is the smallest number $q\geq 1$ such that $\alpha_q \neq 0$ in decomposition \eqref{eq:hermite_decomposition}.
\end{rem}
Processes of form $f(X_t)$ are called Gaussian subordinated processes, and there exists a rich theory on the statistical inference for subordinated processes. It turns out that the Hermite rank plays a crucial role. This fact is already visible in the following Breuer-Major theorem \cite{breuer-major}.
\begin{thm}{\cite[Theorem 1]{breuer-major}}
\label{thm:breuer-major}
Let $(X_t)_{t\in\N},$ $X_t=\left(X_t^{(1)}, X_t^{(2)}, ..., X_t^{(d)}\right),$ be a $d$-dimensional stationary Gaussian process. Assume that $f: \mathbb{R}^d \rightarrow \mathbb{R},$  $f(X_t) \in L^2,$ has a Hermite rank $q \geq 1$. Denote
$$
r_X^{k,i}(\tau) = \E[X^{(k)}_{\tau} X^{(i)}_0].
$$
If
$$
\sum_{\tau=0}^\infty |r_X^{k,i}(\tau)|^q < \infty,\quad \forall k,i=1,2,\ldots,d,
$$
then $\sigma^2 = Var \left[f(X_0)\right] + 2\sum_{t=1}^\infty Cov[f(X_0),f(X_t)]$ is well-defined and
$$
\frac{1}{\sqrt{T}}\sum_{t=1}^T \left[f(X_t)-\E f(X_t)\right] \xrightarrow{d} N(0,\sigma^2),
$$
as $T \rightarrow \infty.$
\end{thm}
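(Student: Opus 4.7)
The plan is to reduce the general case to a CLT for finitely many Hermite chaoses plus a negligible tail, using the orthogonality of Hermite polynomials and the hypothesis $\sum_\tau |r_X^{k,i}(\tau)|^q < \infty$ throughout. Since $f(X_0) \in L^2$, expand
$$
f(X_t) - \E f(X_0) = \sum_{|\mathbf{k}| \geq q} \alpha_{\mathbf{k}} H_{\mathbf{k}}(X_t),
$$
where $\mathbf{k} = (k_1,\ldots,k_d)$ is a multi-index, $H_{\mathbf{k}}$ is the multivariate Hermite polynomial built from the univariate $H_j$'s after orthogonalizing the Gaussian vector $X_t$, and the constraint $|\mathbf{k}| \geq q$ encodes the Hermite-rank hypothesis. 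Let $S_T := T^{-1/2}\sum_{t=1}^T [f(X_t) - \E f(X_0)]$, let $S_T^{(N)}$ be the same sum with the expansion truncated at $|\mathbf{k}| \leq N$, and write $R_T^{(N)} := S_T - S_T^{(N)}$.

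The central analytic tool is the orthogonality $\E[H_{\mathbf{k}}(X_0) H_{\mathbf{l}}(X_\tau)] = 0$ whenever $|\mathbf{k}| \neq |\mathbf{l}|$, combined with the diagram (Isserlis) bound $|\E[H_{\mathbf{k}}(X_0) H_{\mathbf{l}}(X_\tau)]| \leq C_{\mathbf{k},\mathbf{l}} \max_{k,i} |r_X^{k,i}(\tau)|^{|\mathbf{k}|}$. Because every surviving index has $|\mathbf{k}| \geq q$, the hypothesis $\sum_\tau |r_X^{k,i}(\tau)|^q < \infty$ yields $\sum_\tau |Cov(f(X_0), f(X_\tau))| < \infty$, so $\sigma^2$ is well-defined. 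Applied to the Hermite-tail portion of the expansion, the same estimate gives $\sup_T \E[(R_T^{(N)})^2] \to 0$ as $N \to \infty$.

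For each fixed $N$, I would prove $S_T^{(N)} \xrightarrow{d} N(0,\sigma_N^2)$ for some $\sigma_N^2 \to \sigma^2$. The classical route is the method of moments: for every integer $p \geq 1$, compute $\E[(S_T^{(N)})^p]$ via the diagram formula and show that only the Wick-pairing diagrams survive in the limit, reproducing the Gaussian moments, while the summability of $|r|^q$ is exactly what suppresses all non-pairing diagrams. A cleaner modern alternative is to embed $(X_t)$ as an isonormal Gaussian process and apply the Nualart--Peccati fourth-moment theorem on each Wiener chaos, reducing each chaos-level CLT to the convergence of the fourth moment to three times the squared second moment. Either way, combining the chaos-truncation CLT with the tail bound via the standard approximation lemma (and $\sigma_N^2 \to \sigma^2$) delivers $S_T \xrightarrow{d} N(0,\sigma^2)$.

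The main obstacle is the CLT for a single Hermite chaos of order $k > 1$: second-moment orthogonality and the variance/tail bookkeeping are essentially automatic once the diagram formula is in place, but verifying that higher moments (or equivalently the fourth moment in the Malliavin framework) genuinely converge to their Gaussian counterparts is where the combinatorial/analytic depth of Breuer--Major resides.
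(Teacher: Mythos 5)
This statement is quoted verbatim from Breuer and Major and the paper supplies no proof of it, so there is nothing internal to compare against; your sketch has to be judged against the classical argument in the cited source. It matches that argument faithfully: Hermite (chaos) expansion starting at order $q$, orthogonality of distinct chaos orders, the Isserlis/diagram covariance bound $|\E[H_{\mathbf{k}}(X_0)H_{\mathbf{l}}(X_\tau)]|\leq C_{\mathbf{k},\mathbf{l}}\max_{k,i}|r_X^{k,i}(\tau)|^{|\mathbf{k}|}$ combined with $|r|^{|\mathbf{k}|}\leq|r|^{q}$ to get summability, the uniform-in-$T$ $L^2$ tail bound for the truncation remainder, a moment-method (or fourth-moment-theorem) CLT per truncation level, and the standard approximation lemma to pass to the limit. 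You have also correctly located the genuine work, which your sketch does not carry out: the combinatorial estimate showing non-pairing diagrams are $o(T^{p/2})$ (equivalently, the contraction estimates in the Nualart--Peccati route), and, in the multivariate case, the Arcones-type lemma needed to sum $\sum_{\mathbf{k},\mathbf{l}}|\alpha_{\mathbf{k}}\alpha_{\mathbf{l}}|C_{\mathbf{k},\mathbf{l}}$ against $\Var f(X_0)<\infty$. As an outline it is correct and is essentially the proof of the cited reference rather than an alternative route.
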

A stronger version of Theorem \ref{thm:breuer-major} was proven in a recent article \cite{ivan-et-al}. It was shown that the convergence holds even in stronger topologies than the convergence in distribution, e.g. the convergence holds in Wasserstein distance and in Kolmogorov distance. Furthermore, applying Theorem 2.1 of \cite{ivan-et-al}, it is possible to study the rate of convergence. Obviously, one could apply these results in our setting as well, but for a general function $f,$ the bounds are rather complicated. For an interested reader, we refer to \cite{ivan-et-al}.
It is also known \cite{nou-nua-funcBM} that, under the additional assumption that $f(X_t) \in L^{2+\epsilon}$ for some $\epsilon>0$, a functional version of Theorem \ref{thm:breuer-major} holds, i.e.
$$
\frac{1}{\sqrt{T}}\sum_{t=1}^{\lfloor nT\rfloor} \left[f(X_t)-\E f(X_t)\right]
$$
converges weakly towards $\sigma$ times a Brownian motion in the Skorokhod space.

The following result provides a generalisation into the long memory case, where the summability condition does not hold. For details, we refer to \cite{bai-taqqu} and the references therein.
\begin{thm}
\label{thm:long-memory-limit}
Assume that $f: \mathbb{R} \rightarrow \mathbb{R},$  $f(X_t) \in L^2,$ has a Hermite rank $q \geq 1$, and let $X$ be a stationary Gaussian process such that, as $t\to \infty$,  
$$
r_X(t)^q \sim |t|^{2H-2}
$$
for some $H\in \left(\frac12,1\right)$. Then 
$$
\frac{1}{T^{H-1}}\sum_{t=1}^T \left[f(X_t)-\E f(X_t)\right] \xrightarrow{d} Z_q,
$$
as $T \rightarrow \infty,$ where $Z_q$ is the so-called Hermite random variable of order $q$ multiplied with a constant.
\end{thm}
\begin{rem}
\label{rem:normalisation}
The normalisation in Theorem \ref{thm:long-memory-limit} stems from the fact that 
$$
Var\left(\frac{1}{T}\sum_{k=1}^T\left[f(X_t)-\E f(X_t)\right]\right) \sim \frac{1}{T}\sum_{k=1}^T r_X(t)^q.
$$
\end{rem}
\begin{rem}
We stress that $Z_1$ is just a normal random variable, and consequently the only difference compared to Theorem \ref{thm:breuer-major} is the normalisation. However, in the corresponding functional version, the limiting Gaussian process is the fractional Brownian motion instead of the standard Brownian motion.
\end{rem}
\section{Modeling general weakly stationary time series}
\label{sec:results}
Let $(z_t)_{t\in\N}$ be a given  weakly stationary univariate time series with an expected value $\mu=\E[z_t]$ and a given autocovariance function $r_z(\tau)=\E[z_{\tau} z_0]-\mu^2$. Without loss of generality and in order to simplify the presentation, we assume that $\mu=0$ and $Var(z_t)=1$. Assume that the one-dimensional marginals of $(z_t)_{t\in\N}$ are all equal. By equal one-dimensional marginal distributions we mean that the distribution of $z_t$ is the same for all time indices $t$. The corresponding one-dimensional variable is denoted by $z,$ and its cumulative distribution function is denoted by $F_z.$  

We begin with the following result stating that Gaussian subordinated processes can have arbitrary one-dimensional marginals. The claim is based on inverse sampling, and is rather widely accepted folklore in the Gaussian subordination literature. However, since in many textbooks the claim is stated only in the case of continuous distributions $F_z$, for the sake of clarity we present the proof. We stress that the proof is standard, and we do not claim originality here.
\begin{pro}
\label{pro:marginals}
Let $(z_t)_{t\in\N}$ be an arbitrary process with equal square integrable one-dimensional marginals $F_z$.
Then there exists a function $f$ and a standardised Gaussian process $(X_t)_{t\in\N}$ such that  $f(X_t) \in L^2$ has the same one-dimensional marginal distributions as the process $(z_t)_{t\in\N}$. In particular, $f$ has a Hermite decomposition
\begin{equation*}
f(x) = \sum_{j=0}^\infty \alpha_j H_j(x).
\end{equation*}
\end{pro}
\begin{proof}
Let $\Phi$ denote the distribution function of the standard normal distribution. Then, from the standard inverse method,
it is clear that $X = \Phi^{-1}\left(F_z(z)\right)$ has standardized normal distribution. Denote
$$
F_z^{-1}(y) = \inf_x\{F_z(x) \geq y\}
$$
the quantile function of $F_z$. It is well-known that $F_z^{-1}$ is the left-inverse of $F_z$, i.e. 
$F^{-1}_z[F_z(z)] = z$ almost surely. Thus $F_z^{-1}\left(\Phi(X)\right)$ is distributed as $z$, and hence we may set $f(\cdot) = F_z^{-1}(\Phi(\cdot))$.
Furthermore, since $z \in L^2$, we also have $f(X) \in L^2$. From this it follows also that $f$ has the Hermite decomposition. This concludes the proof.
\end{proof}
\begin{rem}
We emphasize that we are only claiming that the one-dimensional distributions of $\left(F^{-1}\left((\Phi(X_t)\right)\right)_{t\in\N}$ are equal to the one-dimensional distributions of $(z_t)_{t\in\N}$. The multidimensional distributions are not necessarily the same.
\end{rem}
By Proposition \ref{pro:marginals}, for any stationary process $z$ (with equal one-dimensional marginals) one can always choose $f$ such that $f(X)$ has the correct one-dimensional distributions. As the analysis of weakly stationary processes boils down to the analysis of the covariance, one would like to construct a Gaussian process $X$ such that, for a given sequence of coefficients $\alpha_k$, the process
$$
Z_t = \sum_{k=1}^\infty \alpha_k H_k(X_t)
$$
have also the same covariance structure than $(z_t)_{t\in \N}$. As Gaussian processes can have arbitrary covariance structures, this question can be rephrased whether each covariance function $r_z$ have a representation 
\begin{equation}
\label{eq:cov-representation}
r_z(\tau) = \sum_{k=1}^\infty k!\alpha_k^2 r_X(\tau)^k,
\end{equation}
where $r_X(\tau)$ is arbitrary covariance function. Unfortunately, this is not the case as the following example shows.
\begin{exa}
Let $Z_t = \frac{1}{\sqrt{6}}H_3(X_t)$. In order for \eqref{eq:cov-representation} to hold for arbitrary covariance function would require that every positive semidefinite matrix $R_Z$ has a representation
$$
R_Z = R_X \circ R_X \circ R_X,
$$
where $R_X$ is positive semidefinite as well and $\circ$ denotes the Hadamard, i.e. element-wise, product of matrices. This clearly does not hold for general matrices, and it is straightforward to construct counterexamples. For instance,
$$
R_Z = \begin{pmatrix}
1 & b^3 & 0\\
b^3 & 1 & b^3\\
0 & b^3 & 1
\end{pmatrix}
$$
with $\frac{1}{4}<b<\left(\frac14\right)^{\frac13}$ is positive definite, and leads to 
$$
R_X = \begin{pmatrix}
1 & b & 0\\
b & 1 & b\\
0 & b & 1
\end{pmatrix}
$$
which is not positive definite. 
\end{exa}
This example reveals that given the marginal distribution $F_z$ and the covariance $r_z$ of $z$, it might be that $F^{-1} _z(\Phi(X_t))$ does not have the same covariance than $z$. On the other hand, in many applications one is only interested in modeling long scale behaviour such as long range dependence. Luckily it turns out that for this purpose, $F^{-1}_z(\Phi(X_t))$ provides a good model. 
\begin{pro}
\label{prop:asymptotic-cov}
Suppose that $(z_t)_{t\in \N}$ is a long-range dependent stationary process with equal one-dimensional marginals. Then there exists a Gaussian process and a function $f$ such that the process $Z_t = f(X_t)$ has same one-dimensional marginals and 
$$
r_Z(t) \sim r_z(t)
$$
as $t \to \infty$.
\end{pro}
\begin{proof}
Again, we set $f(\cdot) = F^{-1}_z(\Phi(\cdot))$. Then the marginals of $Z$ are given by $F_z$. Moreover, using \eqref{eq:f-cov} we obtain
$$
r_Z(\tau) = \sum_{k=q}^\infty k!\alpha_k^2 r_X^k(\tau),
$$
where $q$ is the Hermite rank of $F^{-1}_z(\Phi(x))$. 
Since $r_z(\tau) \sim |\tau|^{2H-2}$, it remains to take any stationary Gaussian process that satisfies $r_X(\tau) \sim |\tau|^{\frac{2H-2}{q}}$. Indeed, it is clear that 
$$
\frac{r_Z(\tau)}{r_X^q(\tau)} \to q!\alpha_q^2
$$
whenever $r_X^k(\tau)>0$ and converges to zero.
Such Gaussian process clearly exists. 
\end{proof}
\begin{rem}
We can easily extend the result beyond long memory case, provided that the decay of $r_z$ is of certain type. Indeed, the statement holds whenever we can find a stationary Gaussian process with covariance given asymptotically as $r_z(\tau)^{\frac{1}{q}}$. For example, this is possible if $r_z$ has exponential decay.
\end{rem}
\begin{rem}
It is well-known that given the asymptotics of the autocovariance $r_X(\tau)$, the term $|r_X(\tau)|^q$ determines the asymptotics of the autocovariance of $\left(f(X_t)\right))_{t\in\N}$ \cite[p. 223]{beran-et-al}. We stress that here we do the opposite; given the autocovariance $r_z(\tau)$ we construct $(X_t)_{t\in\N}$ such that $\left(f(X_t)\right))_{t\in\N}$ has the autocovariance function $r_z(\tau) \sim |r_X(\tau)|^q$.
\end{rem}

\section{On model calibration}
\label{sec:calibration}
In this section we suppose that the process $(z_t)_{t\in \N}$ is given by 
\begin{equation}
\label{eq:gaussian_subordination}
z_t = f(X_t).
\end{equation}
In particular, motivated by Proposition \ref{pro:marginals} and Proposition \ref{prop:asymptotic-cov}, we consider the case $f(x) = F^{-1}(\Phi(x))$.

We are interested in the mean and the autocovariance estimators given by
$$
m_z = \frac{1}{T}\sum_{t=1}^T z_t,
$$
and
$$
\hat{r}_{z}(\tau) = \frac{1}{T}\sum_{t=1}^{T-\tau} \left[z_t-m_z\right]\left[z_{t+\tau}- m_z\right].
$$
For simplicity, we divide by $T$ instead of $T-\tau$. Consequently, the estimators $\hat{r}_{z}(\tau)$ are only asymptotically consistent. On the other hand, in this case, the sample autocovariance function preserves the desired property of positive semidefinitiness. Obviously the asymptotic behaviour of $\hat{r}_{z}(\tau)$ is the same as for
$$
\tilde{r}_{z}(\tau) = \frac{1}{T-\tau}\sum_{t=1}^{T-\tau} \left[z_t-m_z\right]\left[z_{t+\tau}- m_z\right].
$$
Finally, for the case $\mu=0,$ a simpler version
$$
\overline{r}_{z}(\tau) = \frac{1}{T}\sum_{t=1}^{T-\tau} z_tz_{t+\tau}
$$
is often used. If one is only interested in consistency of the estimator, the use of $\overline{r}_{z}(\tau)$ is justified by the following simple lemma which states that asymptotically the difference between $\hat{r}_{z}(\tau)$ and $\overline{r}_{z}(\tau)$ is negligible.
\begin{lma}
\label{lma:estimator_connection}
Assume that $m_z=m_z(T) \rightarrow \mu$ in probability, as $T \rightarrow \infty.$ Then
$$
\hat{r}_{z}(\tau) = \overline{r}_{z}(\tau) - [m_z(T)]^2 + O_p(T^{-1}).
$$
\end{lma}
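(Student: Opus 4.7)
The plan is to expand $\hat{r}_z(\tau)$ algebraically and match it against $\overline{r}_z(\tau) - m_z^2$, with everything left over turning out to be $O_p(T^{-1})$. Expanding the product inside the definition of $\hat{r}_z(\tau)$ gives
$$
\hat{r}_z(\tau) = \overline{r}_z(\tau) - \frac{m_z}{T}\sum_{t=1}^{T-\tau}\left(z_t + z_{t+\tau}\right) + \frac{T-\tau}{T}\,m_z^2.
$$
I would then rewrite the two partial sums as full averages minus their boundary pieces,
$$
\sum_{t=1}^{T-\tau} z_t = T m_z - \sum_{t=T-\tau+1}^{T} z_t, \qquad \sum_{t=1}^{T-\tau} z_{t+\tau} = T m_z - \sum_{t=1}^{\tau} z_t,
$$
and substitute. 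After cancellation this produces
$$
\hat{r}_z(\tau) = \overline{r}_z(\tau) - m_z^2 + \frac{m_z}{T}\Bigl[\sum_{t=T-\tau+1}^{T} z_t + \sum_{t=1}^{\tau} z_t\Bigr] - \frac{\tau}{T}\,m_z^2.
$$

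It remains to show that the final two terms are $O_p(T^{-1})$. Since $\tau$ is fixed, the bracketed quantity is a sum of $2\tau$ random variables, each with $\E z_t^2 = \sigma^2 < \infty$ by weak stationarity; Markov's inequality gives $z_t = O_p(1)$, so the whole bracket is $O_p(1)$. The assumption $m_z \to \mu$ in probability gives $m_z = O_p(1)$ and hence $m_z^2 = O_p(1)$. Consequently both $\frac{m_z}{T}\cdot O_p(1)$ and $\frac{\tau}{T}\,m_z^2$ are $O_p(T^{-1})$, which is exactly the claim.

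There is essentially no hard step here: the whole argument is a deterministic algebraic identity up to boundary corrections of size $\tau$, and the only stochastic input is tightness of each individual $z_t$ (from weak stationarity) together with the hypothesised convergence of $m_z$. The one place that requires a moment of care is keeping track of which sums run to $T-\tau$ versus $T$ so that the $m_z^2$ coefficient comes out to exactly $-1$ after combining $+\frac{T-\tau}{T}$ with the $-2$ produced by the two boundary substitutions; the leftover $-\tau/T$ coefficient is then absorbed into the $O_p(T^{-1})$ remainder.
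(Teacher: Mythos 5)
Your proof is correct and follows essentially the same route as the paper's: expand the product, peel off the boundary sums of length $\tau$, and bound the remainder $\frac{m_z}{T}\bigl[\sum_{t=T-\tau+1}^{T} z_t + \sum_{t=1}^{\tau} z_t\bigr] - \frac{\tau}{T}m_z^2$ as $O_p(T^{-1})$ using finite second moments and $m_z = O_p(1)$. No substantive differences.
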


\begin{proof}
We have that
\begin{equation*}
\begin{split}
\hat{r}_{z}(\tau) &= \frac{1}{T}\sum_{t=1}^{T-\tau}z_tz_{t+\tau} - m_z\frac{1}{T}\sum_{t=1}^{T-\tau}z_t - m_z \frac{1}{T}\sum_{t=1}^{T-\tau}z_{t+\tau} + m_z^2\frac{T-\tau}{T}\\
&= \overline{r}_{z}(\tau) - m_z^2 + R_T,
\end{split}
\end{equation*}
where
$$
R_T = \frac{m_z}{T}\sum_{t=T-\tau+1}^T z_t + \frac{m_z}{T}\sum_{t=1}^{\tau}z_t - \frac{\tau}{T}m_z^2
$$
for $\tau\geq 1$ and $R_T = 0$ for $\tau=0$. Now, since $(z_t)_{t\in\N}$ has finite second moments, the both sums in $R_T$ are bounded in probability. Similarly, the last term is $O_p(T^{-1}),$ as $T \rightarrow \infty$.
\end{proof}
The problem with the usage of $\overline{r}_z(\tau)$ instead of $\hat{r}_z(\tau)$ is in the rate of convergence that can play a crucial role under long memory. In order to study the rate of convergence (and possible limiting distributions) for autocovariance estimators one needs to study the Hermite rank of $g(X_t,X_{t+\tau})=f(X_t)f(X_{t+\tau})$ which, in general, can be larger or smaller than the rank of $f$. This fact is illustrated with the following simple examples.
\begin{exa}
Let $f(x)=x$. Then $f$ has Hermite rank 1, while $[f(x)]^2 = x^2$ has Hermite rank 2.
\end{exa}
\begin{exa}
Let $f(x) = H_2(x)$. Then $f$ has Hermite rank 2 as well as $[f(x)]^2 = x^4 - 2x^2 + 1$.
\end{exa}
\begin{exa}
Let $f(x) = H_3(x) + H_2(x)$. Then $f$ has Hermite rank $2$, while
$[f(x)]^2= x^6 + 2x^5 - 5x^4 - 8x^3 + 7x^2 +6x + 1$ has Hermite rank 1.
\end{exa}
More generally, for an arbitrary pair $(q,p) \in \N^2$ it is straightforward to construct examples of $f$ where $f$ has rank $q$ and $f^2$ has rank $p$. In view of Remark \ref{rem:normalisation}, this means that the mean estimator $m_z$ is of order
$$
m_z = O_p\left(\sqrt{\frac{1}{T}\sum_{k=1}^T r_X(t)^q}\right)
$$
while the variance estimator is of order
$$
\overline{r}_z(0) = O_p\left(\sqrt{\frac{1}{T}\sum_{k=1}^T r_X(t)^p}\right).
$$
Thus the asymptotic properties of the estimators $\hat{r}_z(0)$ and $\overline{r}_z(0)$ can be very different. Similarly, one can construct examples of $f$ where 
the rank of $f^2$ is $q$ and the (two-dimensional) rank of $f(X_\tau)f(X_0)$, for fixed $\tau$, is $p$. Thus even the asymptotical properties and the rate of convergence for variance estimator $\overline{r}_z(0)$ and autocovariance $\overline{r}_z(\tau)$ can be different, and it is crucially important to have knowledge on the exact ranks of $f(X_0)$ and $f(X_\tau)f(X_0)$.
This is problematic, since in practice the function $f$ is usually not known. On the other hand, in our case we have $f(x)=F^{-1}(\Phi(x))$, where the quantile function $F^{-1}$ can be estimated from the observations. In this case it turns out that the Hermite rank is known as well.
\begin{pro}
\label{pro:rank1}
Let $F$ be an arbitrary distribution function with finite variance. Then
$$
f(\cdot) = F^{-1}\left(\Phi(\cdot)\right)
$$
has Hermite rank 1.
\end{pro}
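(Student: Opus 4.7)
The plan is to reduce the claim to the single inequality $Cov(X, f(X)) \neq 0$ for $X \sim N(0,1)$. By orthogonality of the Hermite polynomials under the standard Gaussian measure and the remark following the Hermite rank definition, the rank of $f$ equals $1$ precisely when the coefficient
$$
\alpha_1 = \E[f(X) H_1(X)] = \E[X f(X)] = Cov(X, f(X))
$$
is nonzero (we tacitly assume $F$ is non-degenerate; otherwise $f$ is a.s.\ constant, $f - \E f \equiv 0$, and the rank is not defined for any $q$).

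The key structural observation I would exploit is that $f = F^{-1} \circ \Phi$ is non-decreasing, being the composition of the non-decreasing map $\Phi$ with the generalized inverse $F^{-1}$. Since $F^{-1} \circ \Phi$ is generally neither smooth nor even continuous, any approach based on Stein-type identities or integration by parts is awkward. Instead, I would use the classical exchangeability identity: letting $X'$ be an independent copy of $X$, expanding and using $\E X = \E X' = 0$ together with identical distribution gives
$$
2\, Cov(X, f(X)) = \E\bigl[(X - X')(f(X) - f(X'))\bigr].
$$
Monotonicity of $f$ forces the integrand to be non-negative almost surely, so $Cov(X, f(X)) \geq 0$.

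It then remains to upgrade this to strict positivity. Since $F$ is non-degenerate, there exist $0 < u < v < 1$ with $F^{-1}(u) < F^{-1}(v)$; setting $a = \Phi^{-1}(u)$ and $b = \Phi^{-1}(v)$, monotonicity yields $a < b$ and $f(a) < f(b)$. On the event $\{X < a,\, X' > b\}$, which has positive probability, monotonicity gives $(X - X')(f(X) - f(X')) \geq (a - b)(f(a) - f(b)) > 0$, so the expectation above is strictly positive and $\alpha_1 > 0$.

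The only real obstacle is the lack of regularity of $f$, which rules out smooth calculus-based arguments; the exchangeability trick sidesteps this cleanly because it invokes nothing beyond monotonicity and finiteness of $\E[f(X)^2]$, both of which are automatic in this setting.
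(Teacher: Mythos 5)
Your proof is correct, and it reaches the same target as the paper --- showing $\alpha_1=\E[Xf(X)]>0$ via the monotonicity of $f=F^{-1}\circ\Phi$ --- but by a genuinely different mechanism. The paper pairs $x$ with its reflection $-x$ and uses the symmetry of the standard Gaussian density to write
$$
\E[Xf(X)]=\int_0^\infty\left[F^{-1}\left(\Phi(x)\right)-F^{-1}\left(\Phi(-x)\right)\right]x\e^{-\frac{x^2}{2}}\,\d x,
$$
after which monotonicity of $F^{-1}$ makes the integrand non-negative and strictly positive for large $x$. You instead pair $X$ with an independent copy $X'$ and use the Chebyshev-type identity $2\,Cov(X,f(X))=\E[(X-X')(f(X)-f(X'))]$, with monotonicity again supplying non-negativity of the integrand. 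Your route buys two things: it does not use the symmetry of the Gaussian law at all (it would prove $Cov(X,g(X))\ge 0$ for any non-decreasing $g$ and any integrable $X$), and your strict-positivity step is cleaner --- you exhibit an explicit event of positive probability on which the integrand is bounded below, and you explicitly flag the degenerate case of $F$, whereas the paper's claim that ``the inequality is strict for large enough $x$'' silently assumes $F$ is non-degenerate. The paper's argument is marginally more self-contained in that it never introduces an auxiliary copy, but the two are of comparable length and both rest on the same single structural fact, the monotonicity of $F^{-1}\circ\Phi$.
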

\begin{proof}
In order to prove the claim we have to show that
$$
\E [f(X)X] \neq 0
$$
for $X \sim N(0,1)$. We have
\begin{equation*}
\begin{split}
&\int_{-\infty}^\infty F^{-1}\left(\Phi(x)\right)x e^{-\frac{x^2}{2}}\d x\\
&= \int_{-\infty}^0 F^{-1}\left(\Phi(x)\right)xe^{-\frac{x^2}{2}}\d x
+ \int_0^\infty F^{-1}\left(\Phi(x)\right)xe^{-\frac{x^2}{2}}\d x\\
&= -\int_0^\infty F^{-1}\left(\Phi(-x)\right)xe^{-\frac{x^2}{2}}\d x
+ \int_0^\infty F^{-1}\left(\Phi(x)\right)xe^{-\frac{x^2}{2}}\d x\\
&= \int_0^\infty \left[F^{-1}\left(\Phi(x)\right)-F^{-1}\left(\Phi(-x)\right)\right]xe^{-\frac{x^2}{2}}\d x.
\end{split}
\end{equation*}
Since $F$ is non-decreasing, also $F^{-1}$ is non-decreasing and hence
$$
F^{-1}\left(\Phi(x)\right)-F^{-1}\left(\Phi(-x)\right) \geq 0
$$
for all $x\geq 0$. Furthermore, the inequality is strict for large enough $x$, giving
$$
\E [f(X)X] = \int_0^\infty \left[F^{-1}\left(\Phi(x)\right)-F^{-1}\left(\Phi(-x)\right)\right]xe^{-\frac{x^2}{2}}\d x > 0.
$$
\end{proof}
\begin{rem}
Hermite rank 1 makes the mean and the autocovariance estimators stable, and one usually obtains Gaussian limits with suitable normalisations. For detailed discussion on the stability in the case of Hermite rank 1, we refer to \cite{bai-taqqu}. 
\end{rem}
\begin{rem}
We stress again that while $z = F^{-1}\left(\Phi(X)\right)$ has distribution $F$, in general it is not true that $F^{-1}\left(\Phi(X)\right)X$ is distributed as $zX$. For example, if $z=g(X)$ with suitable $g$ the distribution of $g(X)X$ is not the same as the distribution of $F_{g(X)}^{-1}\left(\Phi(X)\right)X$. A simple example of such case is $\chi^2(1)$ distribution, where $g(x)=x^2$ but $F_{X^2}^{-1}\left(\Phi(x)\right) \neq x^2$. Clearly, $g(X)$ has Hermite rank $2$ while $F_{X^2}^{-1}\left(\Phi(x)\right)$ has Hermite rank 1. This fact highlights our proposal to model $z$ with $z=F^{-1}\left(\Phi(x)\right)$ directly. It is also worth to note that if $g(x)$ is bijective, then the distributions of $F_{g(X)}^{-1}\left(\Phi(X)\right)X$ and $g(X)X$ are equal.
\end{rem}
Proposition \ref{pro:rank1} allows us to study asymptotic properties of the mean estimator $m_z$. Moreover, we get asymptotic properties also for variance and autocovariance estimators in the case of short memory. 
\begin{thm}
\label{thm:short-memory}
Let $(z_t)_{t\in\N}$ be given by
$$
z_t = F_z^{-1}\left(\Phi(X_t)\right)
$$
and $\E z_t^4 = c < \infty$. 
Assume further that
\begin{equation}
\label{eq:summable_cov}
\sum_{\tau=1}^\infty |r_z(\tau)|<\infty.
\end{equation}
Then
\begin{equation}
\label{eq:mean_convergence}
\sqrt{T}[m_z - \mu] \rightarrow N(0,\sigma^2)
\end{equation}
with $\sigma^2 = Var(z_0) + 2\sum_{\tau=1}^\infty r_z(\tau)$,
and for any $k\geq 0$
\begin{equation}
\label{eq:cov_convergence}
\sqrt{T}\left[\hat{r}_z(0) - r_z(0), \hat{r}_z(1)-r_z(1),\ldots,\hat{r}_z(k)-r_z(k)\right] \rightarrow N(0,\Sigma),
\end{equation}
where $\Sigma = (\Sigma_{ij}),\ i,j=0,1,\ldots,k$ is given by
$$
(\Sigma)_{ij} = Cov(z_0z_{i},z_0z_{j}) +2\sum_{\tau=1}^\infty Cov(z_\tau z_{i+\tau},z_0z_{j}).
$$
\end{thm}
\begin{proof}
The convergence \eqref{eq:mean_convergence} follows directly from Theorem \ref{thm:breuer-major} together with the fact that, by Proposition \ref{pro:rank1}, we have $r_z(t) \sim r_X(t)$. For the convergence \eqref{eq:cov_convergence}, first note that without loss of generality and for the sake of simplicity, we may and will assume that $\mu=0$ and use the estimators $\overline{r}_z(k)$ instead. Indeed, the general case then follows easily from \eqref{eq:mean_convergence}, Lemma \ref{lma:estimator_connection}, and the Slutsky's theorem. In order to prove \eqref{eq:cov_convergence} we have to show that, for any $n\geq 1$ and any $(\alpha_1,\ldots,\alpha_n)\in \R^n$, the linear combination
\begin{equation}
\label{eq:lin_combination}
\sqrt{T}\sum_{k=0}^n \alpha_k\left[\overline{r}_z(k)-r_z(k)\right],
\end{equation}
converges towards a Gaussian random variable. We define an $n+1$-dimensional stationary Gaussian process
$\overline{X}_t =(X_t,X_{t+1},\ldots,X_{t+n})$ and a function
$$
G(\overline{X}_t) = \sum_{k=0}^n \alpha_k \left[f(X_t)f(X_{t+k}) - r_z(k)\right],
$$
where $f(\cdot) = F_z^{-1}(\Phi(\cdot))$. With this notation we have
$$
\sqrt{T}\sum_{k=0}^n \alpha_k\left[\overline{r}_z(k)-r_z(k)\right] =
\frac{1}{\sqrt{T}}\sum_{t=1}^T G(\overline{X}_t) + R(T),
$$
where
$$
R(T) = \frac{1}{\sqrt{T}}\sum_{k=0}^n \alpha_k \sum_{t=T-k+1}^T \left[z_tz_{t+k} - r_z(k)\right].
$$
Since $\E z_t^4 = c < \infty$, it follows from Cauchy-Schwarz inequality that $F(\overline{X})\in L^2$. Thus assumption \eqref{eq:summable_cov} together with Theorem \ref{thm:breuer-major} implies that
$$
\frac{1}{\sqrt{T}}\sum_{t=1}^T G(\overline{X}_t) \rightarrow N(0,\sigma^2).
$$
For the term $R(T)$, we observe that the sum
$$
\sum_{k=0}^n \alpha_k \sum_{t=T-k+1}^T \left[z_tz_{t+k} - r_z(k)\right]
$$
is bounded in $L^2$, and hence $R(T) \rightarrow 0$ in probability. Thus the convergence of any linear combination of the form \eqref{eq:lin_combination} towards a normal random variable follows directly from Slutsky's theorem. Finally, the covariance matrix $\Sigma$ is derived by considering convergence of
$$
\sqrt{T}\left[\hat{r}_z(i)-r_z(i) + \hat{r}_{z}(j)-r_z(j)\right]
$$
together with Theorem \ref{thm:breuer-major} and by direct computations.
\end{proof}
In the presence of long memory, one needs to also compute the ranks of $[F_z^{-1}(\Phi(\cdot))]^2$ (for variance estimation) and $F^{-1}_z(\Phi(X_\tau))F^{-1}_z(\Phi(X_0))$ (for autocovariance estimation). Unfortunately, given a general $F_z$ these can be again arbitrary. It turns out however, that if the distribution $F_z$ is symmetric (around $0$), then we can always compute the corresponding ranks. 

Recall that a distribution $F$ is symmetric if $F(x) = 1-F(-x)$ for all $x\in \R$. This translates into 
$$
F^{-1}(y) = - F^{-1}(1-y), \quad y\in[0,1].
$$
In view of the symmetry of the normal distribution, this further implies
\begin{equation}
\label{eq:symmetry}
F^{-1}(\Phi(x)) = -F^{-1}(\Phi(-x)).
\end{equation}
\begin{pro}
Let $X\sim N(0,1)$ and let $F$ be an arbitrary symmetric distribution function with finite variance. Then;
\begin{enumerate}
\item For odd numbers $k\geq 1$ we have
$$
\E[F^{-1}(\Phi(X))X^k] >0.
$$
\item For even numbers $k\geq 0$ we have
$$
\E[F^{-1}(\Phi(X))X^k] =0
$$
\end{enumerate}
In particular,
$$
f(\cdot) = F^{-1}\left(\Phi(\cdot)\right)
$$
has Hermite rank 1 and a decomposition
\begin{equation}
F^{-1}\left(\Phi(X_t)\right) = \sum_{k\geq 1} \alpha_k H_k(X_t),
\end{equation}
where, for $j=0,1,2,\ldots$, we have $\alpha_{2j}=0$.
\end{pro}
\begin{proof}
Let $k$ be fixed. Computing as in the proof of Proposition \ref{pro:rank1}, we get
\begin{equation*}
\begin{split}
&\int_{-\infty}^\infty F^{-1}\left(\Phi(x)\right)x^k e^{-\frac{x^2}{2}}\d x\\
&= \int_{-\infty}^0 F^{-1}\left(\Phi(x)\right)x^ke^{-\frac{x^2}{2}}\d x
+ \int_0^\infty F^{-1}\left(\Phi(x)\right)x^ke^{-\frac{x^2}{2}}\d x\\
&= (-1)^k\int_0^\infty F^{-1}\left(\Phi(-x)\right)x^ke^{-\frac{x^2}{2}}\d x
+ \int_0^\infty F^{-1}\left(\Phi(x)\right)x^ke^{-\frac{x^2}{2}}\d x\\
&= \int_0^\infty \left[F^{-1}\left(\Phi(x)\right)+(-1)^k F^{-1}\left(\Phi(-x)\right)\right]x^ke^{-\frac{x^2}{2}}\d x.
\end{split}
\end{equation*}
As in the proof of Proposition \ref{pro:rank1}, this shows the claim for odd numbers $k$. Similarly, the claim for even $k$ follows from 
\eqref{eq:symmetry}.
\end{proof}
\begin{pro}
\label{pro:symmetric-rank2}
Let $F_z$ be symmetric and let $\tau \in \mathbb{Z}$ be fixed. Then the Hermite rank of $F_z^{-1}(\Phi(X_\tau))F_z^{-1}(\Phi(X_0))$ is at least $2$. Moreover, if $r(k)=r_X(k)$ is non-degenerate, i.e. for all $j\in \N$ we have $r(m)=r(j)$ for at most finitely many $m\in \N$, then the set
$$
S = \{\tau: F_z^{-1}(\Phi(X_\tau))F_z^{-1}(\Phi(X_0)) \textit{ has rank above two}\}
$$
is finite. In particular, if $r(k)\to 0$ as $k\to \infty$, then the set $S$ is finite. 
\end{pro}
\begin{proof}
From
$$
H_k(X)X = H_{k+1}(X) + kH_{k-1}(X)
$$
we obtain
$$
f(X_0)X_0 = \sum_{k\geq 1} \left[\alpha_k H_{k+1}(X_0) + \alpha_k k H_{k-1}(X_0)\right] = \alpha_1 + \sum_{k\geq 2} \left[\alpha_{k-1}+(k+1)\alpha_{k+1}\right]H_k(X_0).
$$
Here we have only even terms $H_{2k}$ while $f(X_t)$ consists of odd terms $H_{2k+1}$, 
giving
\begin{equation*}
\E\left[f(X_t)f(X_0)X_0\right] =0.
\end{equation*}
Thus $q>1$ meaning that the rank is at least two. Let us next prove that the set $S$ is finite. We first note that now
\begin{equation*}
\E\left[f(X_t)f(X_0)X_tX_0\right] = \alpha_1^2 + \sum_{k\geq 2}\left[\alpha_{k-1}+(k+1)\alpha_{k+1}\right]^2k!r^k.
\end{equation*}
Let $\tau_n \to \infty$ be an arbitrary sequence. Since all bounded sequences have a convergent subsequence, we may without loss of generality assume that $r(\tau_n) \to r\in [-1,1]$. Furthermore, without loss of generality we can assume $r(\tau_n) \neq r$. We now argue by contradiction and suppose that $S$ is not finite. Then, by passing to a subsequence if necessary, we can find a sequence $\tau_n \in S$ such that $\tau_n\to \infty$ and $r(\tau_n)\to r$, $r(\tau_n) \neq r$. Since $\tau_n \in S$, the Hermite rank of $f(X_{\tau_n})f(X_0)$ with $\E(X_{\tau_n}X_0) = r(\tau_n) \to r$ is $q>2$ for all $n$. This means that
$$
\E\left[f(X_{\tau_n})f(X_0)X_{\tau_n}X_0\right] = \E\left[f(X_{t_n})f(X_0)\right]r(\tau_n) = \sum_{k\geq 1} \alpha_k^2 k!r(\tau_n)^{k+1}.
$$
However, we can regard 
$$
g_1(r)=\alpha_1^2 + \sum_{k\geq 2}\left[\alpha_{k-1}+(k+1)\alpha_{k+1}\right]^2k!r^k
$$ 
and
$$
g_2(r)=\sum_{k\geq 1} \alpha_k^2 k!r^{k+1}
$$
as real-analytic functions. Consequently, since they coincides for all $r(\tau_n)$ converging to $r$, by the identity theorem we conclude that they are equal everywhere. In particular, this gives us
$$
g_1(0) = \alpha_1^2 = g_2(0) = 0
$$
which leads to a contradiction since, by Proposition \ref{pro:rank1}, we have $\alpha_1\neq 0$. This concludes the proof.
\end{proof}
\begin{rem}
Note that if for some $N$ we have $r(j)=0$ for all $j\geq N$, the statement is still valid while our assumption on the non-degeneracy of $r$ is violated. 
\end{rem}
Now it is straightforward to obtain the following result on the long memory case, analogous to Theorem \ref{thm:short-memory}.
\begin{thm}
Let $(z_t)_{t\in\N}$ be given by
$$
z_t = F_z^{-1}\left(\Phi(X_t)\right),
$$
where $F_z$ is symmetric and $\E z_t^4 = c < \infty$. 
Assume further that $z$ is long-range dependent for some $H\in\left(\frac12,\frac34\right)$. Then there exists a constant $\sigma^2>0$ and a positive semidefinite matrix $\Sigma$ such that 
\begin{equation}
\label{eq:mean_convergence-long}
T^{1-H}[m_z - \mu] \rightarrow N(0,\sigma^2)
\end{equation}
and, for any $k\geq 0$,
\begin{equation}
\label{eq:cov_convergence-long}
\sqrt{T}\left[\hat{r}_z(0) - r_z(0), \hat{r}_z(1)-r_z(1),\ldots,\hat{r}_z(k)-r_z(k)\right] \rightarrow N(0,\Sigma).
\end{equation}
\end{thm}
\begin{proof}
The convergence \eqref{eq:mean_convergence-long} follows from Theorem \ref{thm:long-memory-limit} and Proposition \ref{pro:rank1}, and the convergence \eqref{eq:cov_convergence-long} can be proved by following the proof of Theorem \ref{thm:short-memory} and exploiting the facts that, by Proposition \ref{pro:symmetric-rank2}, the rank is at least two, and that 
$$
\sum_{k=1}^\infty r_X(t)^2 < \infty
$$
for $H<\frac34$. The details are left to the reader.
\end{proof}
\begin{rem}
We remark that here we have used the convention that zero vector can be viewed as $N(0,\Sigma)$ distributed random variable with zero variance. This corresponds to the case when the ranks of $\hat{r}_z(j)-r_z(j)$ are above two for all $j\leq k$. Note also that, by Proposition \ref{pro:symmetric-rank2}, we always obtain a non-trivial limiting distribution by choosing $k$ large enough.
\end{rem}
\section{Discussion}
\label{sec:conc}
In this article, we argued why it is advantageous to model weakly stationary time series using Gaussian subordinated processes, especially in the case of long memory. Under our model, we are able to provide limit theorems for the standard mean and autocovariance estimators. Furthermore, even functional versions of the central limit theorems and Berry-Esseen type bounds in different metrics are available. In our modeling approach $(z_t)_{t\in\N} = \left(f(X_t)\right)_{t\in\N}$, the Hermite rank of the function $f$ is equal to 1. This is especially useful in the case of long memory processes as the limiting distribution is normal if and only if the Hermite rank of $f$ is equal to 1. For the variance and autocovariance estimators, we also proved that the corresponding Hermite ranks are (essentially) two provided that the distribution is symmetric. While in general one can always symmetrize the distribution, one might lose essential information on the transformation. This can be viewed as the price to pay in the trade where we gain more knowledge on the Hermite ranks, allowing to obtain precise asymptotic results for different estimators.

We end this paper by comparing our approach to the existing literature. Linear processes of the form
$$
z_t = \sum_{j=0}^\infty \phi_j \xi_{t-j},
$$
where $(\xi_t)_{t\in \mathbb{Z}}$ is an independent and identically distributed sequence, are widely applied models for stationary time series. To obtain central limit theorems for the mean and the autocovariance estimators, conditions on the coefficients $(\phi_j)_{j\in\mathbb{Z}}$ are required. A sufficient condition for obtaining central limit theorems is
\begin{equation}
\label{eq:coefficient_summability}
\sum_{j=0}^\infty |\phi_j| < \infty
\end{equation}
together with $\E \xi_t^4 < \infty$ (see Theorem 7.1.2. and Theorem 7.2.1. in \cite{bro-dav}). As the sequence $(\xi_t)_{t\in\mathbb{Z}}$ is independent and identically distributed, it follows that the one-dimensional marginals of the process are equal. Moreover, it is customary to pose assumptions for $(\phi_j)_{j\in\mathbb{Z}}$ giving exponential decay for the covariance. Consequently, such linear processes are covered by our modeling approach. Moreover, it is easy to see that $\E \xi_t^4 < \infty$ implies $\E z_t^4 < \infty$, and \eqref{eq:coefficient_summability} is strictly stronger than the assumption of absolutely summable autocovariance function. Thus our modeling approach is more flexible and requires weaker assumptions.

\bibliographystyle{plain}
\bibliography{bibli_v2}

\begin{thebibliography}{10}

\bibitem{arcones-clt}
M.A. Arcones.
\newblock Limit theorems for nonlinear functionals of a stationary {G}aussian
  sequence of vectors.
\newblock {\em The Annals of Probability}, 22(4):2242--2274, 1994.

\bibitem{avr-taq}
F.~Avram and M.~Taqqu.
\newblock Noncentral limit theorems and {A}ppell polynomials.
\newblock {\em The Annals of Probability}, 15(2):767--775, 1987.

\bibitem{bai-taq2}
S.~Bai and M.~Taqqu.
\newblock Multivariate limit theorems in the context of long-range dependence.
\newblock {\em Journal of Time Series Analysis}, 34(6):717--743, 2013.

\bibitem{bai-taqqu}
S.~Bai and M.~Taqqu.
\newblock Generalized {H}ermite processes, discrete chaos and limit theorems.
\newblock {\em Stochastic Processes and their Applications}, 124(4):1710--1739,
  2014.

\bibitem{bai-taq}
S.~Bai and M.~Taqqu.
\newblock How the instability of ranks in non-central limit theorems affects
  large-sample inference under long memory.
\newblock {\em ArXiv: 1610:00690}, 2016.

\bibitem{bar-sur}
J.~Barder and D.~Surgailis.
\newblock Moment bounds and central limit theorems for {G}aussian subordinated
  arrays.
\newblock {\em Journal of Multivariate Analysis}, 114:457--473, 2013.

\bibitem{beran-et-al}
J.~Beran, Y.~Feng, S.~Ghosh, and R.~Kulik.
\newblock {\em Long-Memory Processes: Probabilistic Properties and Statistical
  Methods}.
\newblock Springer, 2013.

\bibitem{breuer-major}
P.~Breuer and P.~Major.
\newblock Central limit theorems for nonlinear functionals of {G}aussian
  fields.
\newblock {\em J. Multivariate Anal.}, 13(3):425--441, 1983.

\bibitem{bro-dav}
P.J. Brockwell and R.A. Davis.
\newblock {\em Time Series: Theory and Methods}, volume~2.
\newblock Springer, 1991.

\bibitem{che-ho}
T.~Cheng and H.~Ho.
\newblock On {B}erry–-{E}sseen bounds for non-instantaneous filters of linear
  processes.
\newblock {\em Bernoulli}, 14(2):301--321, 2008.

\bibitem{ded-rio}
J.~Dedecker and E.~Rio.
\newblock On mean central limit theorems for stationary sequences.
\newblock {\em Annales de l'Institut Henri Poincar\'{e}}, 44(4):693--726, 2008.

\bibitem{deh-roo-taq}
H.~Dehling, A.~Rooch, and M.~Taqqu.
\newblock Non-parametric change-point tests for long-range dependent data.
\newblock {\em Scandinavian Journal of Statistics}, 40(1):153--173, 2013.

\bibitem{deh-taq}
H.~Dehling and M.~Taqqu.
\newblock The empirical process of some long-range dependent sequences with an
  application to {U}-statistics.
\newblock {\em The Annals of Statistics}, 17(4):1767--1783, 1989.

\bibitem{dob-maj}
R.L. Dobrushin and P.~Major.
\newblock Non-central limit theorems for non-linear functional of {G}aussian
  fields.
\newblock {\em Z. Wahrsch. Verw. Gebiete}, 50(1):27--52, 1979.

\bibitem{dou-mas-rio}
P.~Doukhan, P.~Massart, and E.~Rio.
\newblock The functional central limit theorem for strongly mixing processes.
\newblock {\em Annales de l'Institut Henri Poincar\'{e}}, 30(1):63--82, 1994.

\bibitem{gir-sur}
L.~Giraitis and D.~Surgailis.
\newblock {CLT} and other limit theorems for functionals of {G}aussian
  processes.
\newblock {\em Z. Wahrsch. Verw. Gebiete}, 70:191--212, 1985.

\bibitem{gir-taq}
L.~Giraitis and M.~Taqqu.
\newblock Whittle estimator for finite-variance non-{G}aussian time series with
  long memory.
\newblock {\em The Annals of Statistics}, 27(1):178--203, 1999.

\bibitem{her}
N.~Herrndorf.
\newblock A functional central limit theorem for weakly dependent sequences of
  random variables.
\newblock {\em The Annals of Probability}, 28(3):141--153, 1984.

\bibitem{ho-hsi}
H.~Ho and T.~Hsing.
\newblock Limit theorems for functionals of moving averages.
\newblock {\em The Annals of Probability}, 25(4):1636--1669, 1997.

\bibitem{jac-et-al2}
A.~Jach and T.~McElroy.
\newblock Subsampling inference for the autocovariances and autocorrelations of
  long-memory heavy-tailed linear time series.
\newblock {\em Journal of Time Series Analysis}, 33(6):935--953, 2012.

\bibitem{jac-et-al}
A.~Jach, T.~McElroy, and D.N. Politis.
\newblock Subsampling inference for the mean of heavy-tailed long-memory time
  series.
\newblock {\em Journal of Time Series Analysis}, 33(1):96--111, 2012.

\bibitem{levy-et-al}
L.~L\'evy-Leduc, H.~Boistard, E.~Moulines, V.A. Reisen, and M.~Taqqu.
\newblock Robust estimation of the scale and of the autocovariance function of
  {G}aussian short and long-range dependent processes.
\newblock {\em Journal of Time Series Analysis}, 32:135--156, 2011.

\bibitem{mer-pel}
F.~Merlev\'ade and M.~Peligrad.
\newblock The functional central limit theorem under the strong mixing
  condition.
\newblock {\em The Annals of Probability}, 28(3):1336--1352, 2000.

\bibitem{nou-nua-funcBM}
I.~Nourdin and D.~Nualart.
\newblock The functional {B}reuer--{M}ajor theorem.
\newblock {\em Probability Theory and Related Fields}, to appear, 2019.

\bibitem{nou-pec2}
I.~Nourdin and G.~Peccati.
\newblock Stein's method on {W}iener chaos.
\newblock {\em Probability Theory and Related Fields}, 145(1):75--118, 2009.

\bibitem{nou-pec1}
I.~Nourdin and G.~Peccati.
\newblock Stein's method and exact {B}erry-{E}sseen asymptotics for functionals
  of {G}aussian fields.
\newblock {\em The Annals of Probability}, 37(6):2231--2261, 2010.

\bibitem{n-p}
I.~Nourdin and G.~Peccati.
\newblock {\em Normal Approximations Using Malliavin Calculus: from Stein's
  Method to Universality}.
\newblock Cambridge University Press, 2012.

\bibitem{ivan-et-al}
I.~Nourdin, G.~Peccati, and M.~Podolskij.
\newblock Quantitative {B}reuer--{M}ajor theorems.
\newblock {\em Stoc. Proc. Appl.}, 121(4):793--812, 2011.

\bibitem{rio}
E.~Rio.
\newblock {\em Asymptotic Theory of Weakly Dependent Random Processes}.
\newblock Springer, 2017.

\bibitem{wu}
W.B. Wu.
\newblock Central limit theorems for functionals of linear processes and their
  applications.
\newblock {\em Statistica Sinica}, 12(2):635--650, 2002.

\end{thebibliography}

\end{document}